\def\F{\mathbb{F}}
\theoremstyle{thmstyleone}%
\newtheorem{theorem}{Theorem}
\newtheorem{proposition}[theorem]{Proposition}%
\theoremstyle{thmstyletwo}%
\newtheorem{remark}{Remark}%
\newtheorem{lemma}{Lemma}
\newtheorem{conjecture}{Conjecture}
\theoremstyle{thmstylethree}%
\begin{document}

\title[Article Title]{Permutation and local permutation polynomials of maximum degree}

\author*[1]{\fnm{Jaime} \sur{Gutierrez}}\email{jaime.gutierrez@unican.es}

\author*[2]{\fnm{Jorge} \sur{Jiménez Urroz}}\email{jorge.urroz@upm.es}

\affil[1]{\orgdiv{Departamento de Matem\'atica Aplicada y Ciencias de la Computaci\'on}, \orgname{Universidad de Cantabria}, 
\orgaddress{\street{Avda Los Castros}, \city{Santander}, 
 \country{Spain}}}

\affil[2]{\orgdiv{Departamento de Matem\'atica e Informática aplicadas a la ingeniería civil y naval}, \orgname{Universidad Politécnica de Madrid}, \orgaddress{\street{C/ Profesor Aranguren, 3}, \city{Madrid}, \country{Spain}}}

\abstract{Let $\F_q$ be the finite field with $q$ elements and $\F_q[x_1,\ldots, x_n]$ the ring of polynomials in $n$ variables over $\F_q$. 
 In this paper we consider  permutation  polynomials  and local permutation polynomials  over $\F_q[x_1,\ldots, x_n]$,  which  define interesting generalizations of permutations over finite fields. We are able to  construct permutation polynomials in $\F_q[x_1,\ldots, x_n]$ of maximum degree $n(q-1)-1$ and   local permutation polynomials  in  $\F_q[x_1,\ldots, x_n]$ of maximum degree $n(q-2)$ when $q>3$,  extending previous results.}

\keywords{Permutation multivariate polynomials, finite fields.}



\maketitle

\section{Introduction}\label{sec1}

 Let $q=p^r$ be a power of a prime $p$, $\F_q$ be the finite field with $q$ elements and $\F_q^n$ denote the cartesian product of $n$ copies of $\F_q$, for any integer $n\geq 1$.  The ring of polynomials in $n$ variables over $\F_q$ will be denoted by $\F_q[x_1,\ldots,x_n]$. 
 
 \
 
 We say that a  polynomial $f \in \F_q[x_1,\ldots, x_n]$ is a {\it Permutation Polynomial}  (or PP) if the equation $f(x_1,\ldots, x_n)=a$ has $q^{n-1}$ solutions in $\F_q^n$ for each $a\in \F_q$, and  it is  called a {\it Local Permutation Polynomial} (or LPP) if  for each $i$, $1\leq i \leq n$, the  polynomial  $f(a_1,\ldots, a_{i-1},x_i,a_{i+1}, \ldots, a_ n)$ is a permutation polynomial in $\F_q[x_i]$, for all choices of  $(a_1,\ldots, a_{i-1},a_{i+1},\ldots, a_{n}) \in \F_q^{n-1}$. 
 
 \

  It is well known that any map from $\F_q^n$ to $\F_q$ can be uniquely represented as $f \in \F_q[x_1,\ldots,x_n]$ such that $\deg_{x_i}(f) < q$ for all $i=1,\ldots,n$, where $\deg_{x_i}(f) $ is the degree of $f$ as a polynomial in the variable $x_i$ with coefficients in the polynomial ring $\F_q[x_1,\dots,x_{i-1},x_{i+1},\dots, x_n]$, see \cite{LN}. Throughout this paper, we identify all functions $\F_q^n \to \F_q$ with  such polynomials, and every polynomial, will be of degree  $\deg_{x_i}(f) < q$, unless otherwise specified. As a consequence, any polynomial  $f(x_1,\ldots,x_n)$ has degree at most $n(q-1)$.

\

 A classification of permutation polynomials in $\F_q[x_1,\ldots,x_n]$
of degree at most two is given in \cite{N1}, see also \cite{LN, N2} for several properties and results and the particular case $n=1$.   On the other hand, any  LPP is a permutation polynomial, but the opposite is not true in general. We can see that by simply considering the permutation polynomial $f(x_1,\ldots,x_n)=x_1^{q-1}+x_2$, which is not an LPP since $f(x_1,a_2,\dots, a_n)$ takes only the two values 
$a_2$ and $a_2+1$. The author of \cite{M1} and \cite{M2} gives necessary and sufficient conditions for polynomials in two and three variables to be local permutations polynomials over a prime field $\F_p$. These conditions are expressed in terms of the coefficients of the polynomial. 
 
 \
 
We know that any  LPP has degree at most $n(q-2)$, see Proposition 1 in \cite{GU}, and the  authors in \cite{DHK} proved that this bound is sharp for $n=2$ variables, see also \cite{GU} for a short proof of this fact.  A recent result about degree bounds for $n$  local permutation polynomials defining a permutation of $\F_q^n$  is presented in \cite{AKT}.

\

The main contribution in this paper is  a general construction  of permutation and local permutation polynomials of maximum degree  for all $n$.

\

The remainder of the paper is structured as follows. We start with a very short outline of some basic facts about
permutation and local permutation polynomials, and  technical results for later use  in 
 Section~\ref{prelimi}. 
In Section~\ref{pp} we show a class of permutation polynomials  of maximum degree $n(q-1)-1$. 
In Section~\ref{todasvariables} we  provide general constructions  of local permutation polynomials of maximum degree.
In particular we show  the existence of LPP of maximum degree $n(q-2)$ for any $n$, prime number and $q=p^r>3$. 
 Section~\ref{tresvariables} is dedicated to show a different  strategy in order to find  LPP of maximum degree and we  illustrate  it for polynomials with only three variables  providing new families of   local permutation polynomials of maximum degree $3(q-2)$ for  $q=p^r$ and $p=2$ and $p=3$. 
 We conclude with Section~\ref{conclusiones}, which makes some final 
comments and poses open questions.

\section{Preliminaries}\label{prelimi}
One of the main tool for the rest of the paper is the following:
  
\begin{theorem}[\cite{GU}] \label{composition}  Let $f \in \F_q[x_1,\ldots, x_n]$ be a non constant polynomial.

\begin{enumerate}

\item  If   $f = g(x_1,\ldots,x_m)  +h(x_{m+1},\ldots,x_n), \quad  1\leq m < n $, then
\newline
 $f$  is  LPP $\iff$  $g$ and $  h$ are local permutation polynomials.

\item Let   $g(z) \in \F_q[z] $ be  permutation polynomial. Then  
\newline
 $ f $  is a (local) permutation polynomial  $\iff$  $g(f(x_1,\ldots, x_n)$ is a (local) permutation polynomial

\item  Let $h_1(x_1), \ldots, h_n(x_n)$ be permutation polynomials.  Then 
\newline

$ f$  is  (local) permutation polynomial  $\iff$  $f(h_1(x_1),\ldots,h_n(x_n))$ is (local) permutation polynomial

\item The univariate permutation polynomial 
$$t(x)=x+\sum_{k=0}^{q-2}x^k \in \F_q[x]$$ permutes $1$ and $0$, and leave fixed any other element in $\F_q$.

 \item  If $f$ is a  LPP then   $f$  is linear if $q=2$ and $q=3$, and has degree at most  $n(q-2)$ otherwise. 

\end{enumerate}
\end{theorem}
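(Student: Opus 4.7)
The plan is to handle items (1)--(3) by direct verification from the definitions, (4) by an explicit computation, and (5) by a Hermite-type character-sum argument combined with a short non-vanishing lemma in the small-$q$ case.

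For (1), fixing an index $i\leq m$ and values $a_j$ for $j\neq i$, the $x_i$-section of $f$ differs from the $x_i$-section of $g$ by the constant $c=h(a_{m+1},\ldots,a_n)$; adding a constant preserves whether a univariate polynomial is a permutation, so $g$ being LPP along the first $m$ coordinates is equivalent to $f$ being LPP along them, and symmetrically for $h$. For (2), $g$ is a bijection of $\F_q$, so $\#\{x:g(f(x))=a\}=\#\{x:f(x)=g^{-1}(a)\}$, which equals $q^{n-1}$ for every $a$ iff $f$ is PP; the LPP half is the same argument applied to each univariate section, using that a composition of univariate PPs is a univariate PP. For (3), the coordinate-wise map $(x_1,\ldots,x_n)\mapsto(h_1(x_1),\ldots,h_n(x_n))$ is a bijection of $\F_q^n$, so global PP counts are preserved, and each $x_i$-section of $f(h_1,\ldots,h_n)$ is a section of $f$ precomposed with $h_i$, which is a univariate PP iff the corresponding section of $f$ is. For (4), compute directly: $t(0)=0+0^0=1$, $t(1)=1+(q-1)=q=0$, and for $a\notin\{0,1\}$ the geometric sum gives $\sum_{k=0}^{q-2}a^k=(a^{q-1}-1)/(a-1)=0$ by Fermat, so $t(a)=a$.

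The substantive content lies in (5). The key input is the identity $\sum_{c\in\F_q}c^k=-1$ when $k\geq 1$ and $(q-1)\mid k$, and $0$ otherwise. For a univariate $p(x)=\sum_k c_k x^k$ of degree $<q$ this yields $\sum_{c}p(c)=-c_{q-1}$; if $p$ is a PP, then $\sum_{c}p(c)=\sum_c c=0$ whenever $q>2$, forcing $c_{q-1}=0$ and hence $\deg p\leq q-2$. Applying this to every $x_i$-section of $f$ gives $\deg_{x_i}f\leq q-2$ for each $i$, and summing yields $\deg f\leq n(q-2)$.

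It remains to prove the linearity claim for $q\in\{2,3\}$, where each univariate section of $f$ has degree at most $1$, so one may write $f=x_iA_i+B_i$ with $A_i,B_i$ polynomials in the remaining $n-1$ variables. For $q=2$, the leading coefficient of any degree-one PP in $\F_2[x]$ equals $1$, so $A_i$ is identically $1$ as a function, hence equals $1$ as a reduced polynomial, and $f$ contains no monomial involving $x_i$ together with another variable. For $q=3$, $A_i$ must be nowhere zero on $\F_3^{n-1}$; the main obstacle is the lemma that a polynomial of degree $\leq 1$ in each variable that never vanishes on $\F_3^{n-1}$ must be a nonzero constant. This I would prove by induction on the number of variables: writing $A_i=y_jC_1+C_0$, the non-vanishing along each line in the $y_j$ direction forces $C_1$ to vanish at every specialization of the other $y_\ell$ (as in the univariate linear case, where $\alpha y+\beta$ avoids $0$ only when $\alpha=0$), hence $C_1\equiv 0$. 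In both cases $f$ collapses to $\sum a_i x_i+c$, which is linear.
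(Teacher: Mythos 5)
Your proof is correct. Note that the paper itself offers no proof of this theorem: it is stated as an imported result from the cited reference \cite{GU}, so there is no internal argument to compare against. Your verification of items (1)--(4) is the standard one and is complete (in particular, the computation $t(0)=1$, $t(1)=0$, $t(a)=a$ otherwise via the geometric sum is exactly what is needed). For item (5), your Hermite-type argument --- a reduced univariate permutation polynomial $p$ satisfies $\sum_{c\in\F_q}p(c)=-c_{q-1}$, which must equal $\sum_{b\in\F_q}b=0$ for $q>2$, forcing $c_{q-1}=0$ --- is the usual route to the bound $n(q-2)$. The one step you leave implicit is the passage from ``every $x_i$-section of $f$ has degree at most $q-2$'' to ``$\deg_{x_i}f\le q-2$'': the coefficient of $x_i^{q-1}$ in $f$ is a reduced polynomial in the remaining $n-1$ variables that vanishes at every point of $\F_q^{n-1}$, hence is the zero polynomial; this uses the paper's standing convention that all polynomials are reduced modulo $x_j^q-x_j$ and deserves a sentence. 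Your treatment of the small fields is also sound: for $q=2$ the leading coefficient of each section must be $1$, and for $q=3$ your lemma that a polynomial of degree at most one in each variable with no zeros on $\F_3^{n-1}$ is a nonzero constant (proved by the induction you sketch, since $\alpha y+\beta$ omits $0$ on $\F_3$ only when $\alpha=0$) correctly yields the linearity claim.
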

 
  The following Lemmata will be needed later.

\begin{lemma}\label{polinomio1} We consider the univariate polynomial $h \in \F_q[x]$ defined by 
$$h(x)=\sum_{k=0}^{q-3}(k+1)x^k. $$
\begin{enumerate}
\item   $h(x^{q-2}) \equiv \bar {h}(x)  \bmod x^q -x$, where
$$\bar{h}(x)= 1 +x + \sum_{k=0}^{q-2}-kx^k $$
\item Let $t(x)$ be as in  Theorem \ref{composition}, then $\bar{h}(t(x)) \equiv   \bar{h}(x)\bmod x^q -x$.
\end{enumerate}
\end{lemma}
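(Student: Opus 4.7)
The plan is to exploit the function–polynomial correspondence: two polynomials of degree less than $q$ are congruent modulo $x^q-x$ if and only if they induce the same map $\F_q\to\F_q$. So in each part I just need to compare the values at every $a\in\F_q$.

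For part (1), I would split on whether $a=0$ or not. At $a=0$, $h(0^{q-2})=h(0)=1$ (only the $k=0$ term of $h$ contributes, with coefficient $1$), and $\bar h(0)=1$ as well. For $a\neq 0$, I use $a^{q-2}=a^{-1}$, so
$$h(a^{q-2})=\sum_{k=0}^{q-3}(k+1)a^{-k}=1+\sum_{k=1}^{q-3}(k+1)a^{q-1-k},$$
since $a^{q-1}=1$. Setting $j=q-1-k$, the sum runs over $j=2,\dots,q-2$ and the coefficient $k+1=q-j$ reduces to $-j$ in $\F_q$ because $q\cdot 1=0$ in characteristic $p$. That matches $\bar h(a)=1+a+\sum_{k=1}^{q-2}(-k)a^k=1-a+\sum_{k=2}^{q-2}(-k)a^k+a=1+\sum_{k=2}^{q-2}(-k)a^k$ after the $k=1$ term is cancelled by the $+x$ summand. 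So the two functions agree on $\F_q$.

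For part (2), I would invoke item 4 of Theorem~\ref{composition}: $t$ swaps $0\leftrightarrow 1$ and fixes every other element of $\F_q$. Consequently $\bar h(t(a))=\bar h(a)$ for all $a\in\F_q$ is equivalent to the single identity $\bar h(0)=\bar h(1)$. We already have $\bar h(0)=1$, and
$$\bar h(1)=1+1-\sum_{k=0}^{q-2}k=2-\tfrac{(q-1)(q-2)}{2}.$$
Since $q\equiv 0\pmod p$, the factors give $(q-1)(q-2)\equiv 2$ in $\F_q$, so the integer sum $\frac{(q-1)(q-2)}{2}$ reduces to $1$ in $\F_q$, yielding $\bar h(1)=2-1=1=\bar h(0)$.

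The only mildly delicate point is the reduction of the binomial coefficient $\binom{q-1}{2}$ modulo $p$ when $p=2$: here I would rewrite $(q-1)(q-2)/2=(q-1)(2^{r-1}-1)$ as an integer and then reduce, obtaining $1\cdot(-1)=1$ in $\F_q$ for $r\geq 2$, so the identity $\bar h(1)=\bar h(0)$ holds uniformly. Beyond that, both parts are just careful bookkeeping with the identification of polynomials modulo $x^q-x$ and functions on $\F_q$.
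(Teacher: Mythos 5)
Your proof is correct and follows essentially the same route as the paper: item (1) rests on the exponent reduction $k(q-2)\equiv q-1-k\pmod{q-1}$ with the coefficient reindexing $k+1=q-j\equiv -j$ (you carry this out pointwise via $a^{q-2}=a^{-1}$, where the paper reduces the monomials formally modulo $x^q-x$), and item (2) is exactly the paper's argument, reducing to the single check $\bar h(0)=\bar h(1)$ via $\frac{(q-1)(q-2)}{2}\equiv 1$ in $\F_q$. The only cosmetic remark is that the equivalence ``congruent modulo $x^q-x$ if and only if equal as functions on $\F_q$'' holds for arbitrary polynomials, not only those of degree less than $q$, and that general form is what you actually invoke, since $h(x^{q-2})$ has degree well above $q$.
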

\begin{proof}
The first item follows because  the degree of $x^{k(q-2)}$ is $q-k-1$ for $k=1,\ldots, q-2$. Moreover, for $ a \in \F_q$ with 
 $ a \not=0$ and $ a \not = 1$ we have $\bar{h}(t(a))=\bar{h}(a)$, and  also $\bar{h}(1) = \frac{(q-1)(q-2)}{2}=1=\bar{h}(0).$ In other words $\bar{h}(t(x))$ and $\bar{h}(x)$ represent the same polynomial map.
\end{proof}

\section{Permutation polynomials of maximum degree}\label{pp} 
 We know that any  polynomial $f \in \F_q[x_1,\ldots,x_n]$ has degree at most $n(q-1)$, however for permutation polynomials this bound is smaller.
 
 \
 
\begin{proposition} Any permutation polynomial $f \in \F_q[x_1,\ldots,x_n]$  has degree at most  $n(q-1)-1$.\end{proposition}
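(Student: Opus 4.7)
The plan is to extract the coefficient of the unique top-degree monomial by summing $f$ over all points of $\F_q^n$ and comparing two evaluations of that sum.

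Since we identify $f$ with its reduced representative satisfying $\deg_{x_i}(f) < q$, I would write
$$
f(x_1,\ldots,x_n)=\sum_{0\le e_i\le q-1} c_{e_1,\ldots,e_n}\, x_1^{e_1}\cdots x_n^{e_n}.
$$
The only monomial of total degree $n(q-1)$ in this normal form is $x_1^{q-1}\cdots x_n^{q-1}$, so the statement reduces to proving that the coefficient $c_{q-1,\ldots,q-1}$ must vanish whenever $f$ is a permutation polynomial.

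To achieve this, I would evaluate $S:=\sum_{(a_1,\ldots,a_n)\in\F_q^n} f(a_1,\ldots,a_n)$ in two different ways. On one hand, if $f$ is a PP then every $a\in\F_q$ is hit exactly $q^{n-1}$ times, so $S=q^{n-1}\sum_{a\in\F_q}a = 0$ in $\F_q$ (since $\sum_{a\in\F_q}a=0$, and moreover $q^{n-1}=0$ for $n\ge 2$). On the other hand, expanding via the monomial decomposition gives
$$
S=\sum_{e_1,\ldots,e_n} c_{e_1,\ldots,e_n}\prod_{i=1}^{n}\Bigl(\sum_{a\in\F_q}a^{e_i}\Bigr).
$$
Here I would invoke the classical power-sum identity: $\sum_{a\in\F_q}a^{e}$ equals $-1$ when $e>0$ and $(q-1)\mid e$, and equals $0$ otherwise (including $e=0$, where the sum is $q=0$). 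Since $0\le e_i\le q-1$, the inner sum is nonzero precisely when $e_i=q-1$, in which case it equals $-1$.

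Therefore the only surviving term on the right-hand side is the one with all $e_i=q-1$, giving $S=(-1)^n\, c_{q-1,\ldots,q-1}$. Combining with $S=0$ forces $c_{q-1,\ldots,q-1}=0$, and hence $\deg f\le n(q-1)-1$. The only potentially delicate point is the power-sum identity over $\F_q$, but this is standard (one checks it by choosing a generator of $\F_q^\ast$ and summing a geometric progression), so there is no real obstacle; the proof is essentially a one-line character/orthogonality computation dressed up combinatorially.
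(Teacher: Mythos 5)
Your proof is correct and is essentially the paper's argument viewed from the dual side: both identify the coefficient of $x_1^{q-1}\cdots x_n^{q-1}$ with $(-1)^n\sum_{(a_1,\ldots,a_n)\in\F_q^n}f(a_1,\ldots,a_n)$ and then use the permutation property to show this sum vanishes, the paper extracting the coefficient from the Lagrange interpolation formula while you extract it via the power-sum identity $\sum_{a\in\F_q}a^e$. The two routes are equivalent in substance, and both share the same harmless blind spot at the degenerate case $q=2$, $n=1$.
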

\begin{proof}
By the Lagrange  interpolation (see \cite{LN}), we have:
\begin{equation}\label{eq:lag}
f(x_1,\ldots, x_n) = \sum_{(c_1,\ldots,c_n)\in \F_q^n} f(c_1,\ldots,c_n)(1-(x_1-c_1)^{q-1})\cdots(1-(x_n-c_n)^{q-1}),
\end{equation}
The coefficient of the monomial $x_1^{q-1}\cdots x_n^{q-1}$ in the above polynomial identity is
$$ (-1)^n \sum_{(c_1,\ldots,c_n)\in \F_q^n} f(c_1,\ldots,c_n) =  (-1)^n  \sum_{c \in \F_q} q^{n-1}c= 0.$$
The last equality is because $f$ is a permutational polynomial, then for any $a \in \F_q$ the cardinality of  the set
 $$C_a=\{(c_1,\ldots,c_n) \in \F_q^n: f(c_1,\ldots,c_n)=a\}$$
  is $q^{n-1}$.  Moreover the set  $\Delta$ where
 $$\Delta= \{C_a: a \in \F_q\}$$ 
 forms a partition of $\F_q^n$.
\end{proof}

A natural question is if the upper bound $n(q-1)-1$ is sharp.  We are now ready to prove the main result in this section.
\begin{theorem} For any $n$, prime number $p$ and $q=p^r$, there is an PP over $\F_q[x_1,\dots, x_n]$ of maximum degree $n(q-1)-1$
 defined over $\F_p$.
\end{theorem}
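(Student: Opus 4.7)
The plan is to exhibit an explicit polynomial, motivated by the tools collected in Section~\ref{prelimi}. The idea is to take the trivial permutation polynomial $x_n$ and perturb it by a term that is switched on exactly when $x_1,\ldots,x_{n-1}$ are all nonzero. Introduce the ``indicator'' $\mu := (x_1 x_2 \cdots x_{n-1})^{q-1}$, which equals $1$ if every $x_i\neq 0$ for $i<n$ and equals $0$ otherwise, and the univariate permutation polynomial $t(x)$ from Theorem~\ref{composition}(4). Setting
$$f(x_1,\ldots,x_n) := x_n + (x_1 x_2 \cdots x_{n-1})^{q-1}\bigl(t(x_n) - x_n\bigr),$$
and using $t(x_n)-x_n = \sum_{k=0}^{q-2} x_n^k$, both factors have coefficients in $\{0,1\}\subset\F_p$, so $f\in\F_p[x_1,\ldots,x_n]$.

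First I would compute the total degree. The top monomial of $f$ is $x_1^{q-1}\cdots x_{n-1}^{q-1}\,x_n^{q-2}$, of degree $(n-1)(q-1)+(q-2)=n(q-1)-1$; every individual exponent stays strictly below $q$, so $f$ is already in the reduced form used throughout the paper.

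Next I would verify the permutation property by splitting $\F_q^n$ according to the value of $\mu$. On the subset where some $x_i=0$ with $i<n$, one has $\mu=0$ and $f$ reduces to $x_n$; on its complement, where all $x_1,\ldots,x_{n-1}$ are nonzero, one has $\mu=1$ and $f$ reduces to $t(x_n)$. Since $x_n\mapsto x_n$ and $x_n\mapsto t(x_n)$ are bijections of $\F_q$, for every $a\in\F_q$ the number of preimages is
$$\bigl(q^{n-1}-(q-1)^{n-1}\bigr)+(q-1)^{n-1}=q^{n-1},$$
which is precisely the PP condition.

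The main point I anticipate having to ``get right'' is matching the extremal degree $n(q-1)-1$ rather than falling short: the previous proposition forces the coefficient of $x_1^{q-1}\cdots x_n^{q-1}$ to vanish, so the total degree is necessarily $\le n(q-1)-1$. The construction dodges the forbidden monomial transparently because $t(x_n)-x_n$ has $x_n$-degree exactly $q-2$, yielding $n(q-1)-1$ as the realized total degree, while the perturbation only re-shuffles $f$ over the slice $(\F_q^\ast)^{n-1}\times\F_q$ on which the modified map $t(x_n)$ is still a bijection in $x_n$, so the PP property is immediate.
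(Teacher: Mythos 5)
Your construction is exactly the paper's: the polynomial $x_1^{q-1}\cdots x_{n-1}^{q-1}\,(t(x_n)-x_n)+x_n$ with $t$ from Theorem~\ref{composition}(4), analyzed by the same case split on whether all of $x_1,\ldots,x_{n-1}$ are nonzero. The argument is correct and matches the paper's proof, with your preimage count and degree check simply making explicit what the paper leaves implicit.
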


\begin{proof}
We consider  $t(x)=x+\sum_{k=0}^{q-2}x^k \in \F_q[x]$ (see Theorem \ref{composition}) and constructing the polynomial
$$ h_n=h_n(x_1,\ldots,x_n)= x_1^{q-1} \cdots x_{n-1}^{q-1} (t(x_n)-x_n) +x_n$$
Let $a\in \F_q$, for any $(c_1,\ldots,c_{n-1}) \in \F_q^{n-1}$ we prove that there exits a unique $c_n\in \F_q$ such that 
$$h(c_1,\ldots,c_n) = a$$
Two cases are presented:
\begin{enumerate}
\item If $c_i \not=0$ for all $i=1,...,n-1$, we have $h_n(c_1,\ldots,c_{n-1},x_n)= t(x_n)$ which is a univariate permutation polynomial.
\item Otherwise, if $c_i=0$ for some $i$, then $h_n(c_1,\ldots,c_{n-1},x_n)= x_n$, which is a univariate permutation polynomial, since it is linear.
 \end{enumerate}
 \end{proof}

\begin{remark}
Of course, there are  other families of permutation polynomials  of maximum degree, for instance: 
\begin{itemize}
\item Let $q=p^r$ with $p>2$ and considering the polynomial

$$ x_1^{q-1} \cdots x_{n-1}^{q-1} x_n^{q-2} +x_n^{q-2}$$
 
\item In  even extensions of $\F_2$, $\F_{2^{2r}}>4$,  we take the Dickson polynomial  ($ 0 \not= a \in \F_q$)
$$g_k(x,a) = \sum_{j=0}^{\left \lfloor{\frac{k}{2}}\right \rfloor} \frac{k}{k-j}\binom{k-j}{k}(-a)^jx^{k-2j},$$
which is a univariate permutation polynomial in $\F_q$ if and only if $\gcd(k, q^2-1) =1$, see \cite{LN}.

 Then, the polynomial $g_{q-2}(x,1)$  is a permutation polynomial of degree $q-2$ and it is of the form:
 
 $$ g_{q-2}(x,1) = x^{q-2}+m(x) + x^2,$$  where  the degree of $m(x)$ is bigger than $2$  and smaller than $q-2$ or $m(x)=0$.
 
 The polynomial
 $$ x_1^{q-1} \cdots x_{n-1}^{q-1} (x_n^{q-2}+m(x_n)  ) + x_n^2$$
 is a permutation polynomial since $\gcd(2^{2r}-2, 2^{4r}-1)=1$, and has degree $n(q-1)-1$.
 
 \
 
For $q=4$, we consider the polynomial $\alpha_n= \alpha_n(x_1,\ldots,x_n) \in \F_4[x_1,\ldots,x_n]$ which is the sum of all  monomials of degree smaller than $n(q-1)$ plus $x_1$. For $n=2$, we have
 $$\alpha_2 = x_{2}^{3} x_{1}^{2} + x_{2}^{2} x_{1}^{3} + x_{2}^{3} x_{1} + x_{2}^{2}
x_{1}^{2} + x_{2} x_{1}^{3} + x_{2}^{3} + x_{2}^{2} x_{1} + x_{2}
x_{1}^{2} + x_{1}^{3} + x_{2}^{2} + x_{2} x_{1} + x_{1}^{2}  +x_2 +1
$$
And  more general, for arbitrary $n \geq 1$: 
 
 $$\alpha_n = \alpha_n(x_1,\ldots,x_n)=  \sum_{0\leq i_1\leq 3,\ldots,  0\leq i_n\leq 3}^{i_1+\cdots +i_n \leq n(q-1)-1} x_1^{i_1}x_2^{i_2}\cdots x_n^{i_n}  +x_1$$
 
 In order to prove that  $\alpha_n(x_1,\ldots,x_n) $  is a PP in $\F_4[x_1,\ldots,x_n]$ of maximum degree $3n-1$ for $n \geq  1$, we observe that 
 
   \begin{eqnarray*} \label{relation}
 \alpha_n(x_1,\ldots,x_{n-1},1)&=&x_1^3\cdots x_{n-1}^3 +x_1 \\
 \alpha_n(x_1,\ldots,x_{n-1},0)&=&x_1^3\cdots x_{n-1}^3+ \alpha_{n-1}(x_1,\ldots,x_{n-1}) \\
  \alpha_n(x_1,\ldots,x_{n-1},u)&=&\alpha_{n-1}(x_1,\ldots,x_{n-1}) \\
   \alpha_n(x_1,\ldots,x_{n-1},u+1)&=&\alpha_{n-1}(x_1,\ldots,x_{n-1}) 
\end{eqnarray*}
where $\F_4 = \{ 0, 1, u, u+1 \}$ such that $u^2+u+1 = 0$. Now, it is straightforward  to prove  it by  induction  on $n$. 

\item Let $q=p^r$ odd, and consider $g\in F_q[x_1,\ldots, x_n]$ any polynomial of total degree $n(q-1)/2$, $f\in\F_q[y]$ a PP of degree $q-2 $ and $a\in \F_q$ a quadratic non-residue. Observe that
$0=x^{q-1}-1=(x^{(q-1)/2}-1)(x^{(q-1)/2}+1)$ has exactly $q-1$ solutions over $\F_q$ and both  $x^{(q-1)/2}-1=0$ and $x^{(q-1)/2}+1=0$ have at most $(q-1)/2$ solutions each, so in particular $x^{(q-1)/2}-1=0$ and $x^{(q-1)/2}+1=0$
have both exactly $(q-1)/2$ solutions, and hence there are $(q-1)/2$  quadratic non residues. Now consider the polynomial $h\in\F_q[x_1,\ldots, x_n,y]$ given by
$$
h(x_1,\dots, x_n,y)=(g(x_1,\dots,x_n)^2-a)f(y),
$$
where $a$ is not a square. Then, for any $(c_1,\cdots c_n)\in\F_q^n$ and $c\in\F_q$ there is exactly one $b\in\F_q$ such that 
$$
h(c_1,\dots,c_n,b)=c.
$$
Indeed, since $a$ is not a square $g(c_1,\dots,c_n)^2-a=\delta\ne 0$ and given $c\in\F_q$ there is a unique $b$ such that $f(b)=c\delta^{-1}$, since $f$ is PP.
In particular $h(c_1,\dots,c_n,b)=c$ and $h(x_1,\dots,x_n,y)=c$ has exactly $q^{n}$ solutions as needed.

\

Now suppose $q=2^r$ with $r$ even. Then, $3|q-1$ and arguing as before, we find $\frac{2(q-1)}3$ elements in $\F_q$ which are not cubes. In fact all the solutions of $r(x)=0$ where 
$x^{q-1}-1=(x^{(q-1)/3}-1)r(x)$. So we can select  $a\in\F_q$ which is not a cube and construct
$$
h(x_1,\dots,x_n,y)=(g(x_1,\dots,x_n)^3-a)f(y),
$$
where $g\in F_q[x_1,\ldots, x_n]$ is  any polynomial of total degree $n(q-1)/3$, and $f\in\F_q[y]$ a PP of degree $q-2$. The proof is analogous to the previous case.

\

Finally, suppose $q=2^r$ with odd $r>1$. Observe that, in this case, we could have $q-1$ a Mersenne prime and the previous argument does not work since $x^r$ is permutation of $\F_q$ for any  $r<q-1$.  In this case consider
$$
h(x_1,\dots, x_n,y)=(x_1^{q-1}\cdots x_n^{q-1}+\alpha)f(y),
$$
where $\alpha\ne 0,1$ and $f$ is any PP of degree $q-2$. Then for any $(c_1,\dots, c_n)\in\F_q^n$ we have $(x_1^{q-1}\cdots x_n^{q-1}+\alpha=\delta\ne 0$ and hence for each $c\in\F_q$ there exists a unique $b$ tal que $f(b)=c\delta^{-1}$. The proof now  that $h$ is a PP of degree $n(q-1)+q-2$ is similar to those above.

 \end{itemize}
 
  \end{remark}
 
\section{Local permutation polynomials of maximum degree in $\F_q[x_1,\ldots,x_n]$}\label{todasvariables}

This section is dedicated to show  local permutation polynomials of maximum degree in $n$ variables over a finite field $\F_q$. 
As many times in algebra we treat differently the characteristic 2.

\begin{theorem}\label{lpp2} Let $q=2^r > 2$.  The polynomials

$$\beta_{q}^{(n)}=\beta_{q}^{(n)}(x_1,\ldots,x_n)=  \sum_{i_1\geq 1,\ldots,  i_n\geq 1}^{2^r-2} x_1^{i_1}x_2^{i_2}\cdots x_n^{i_n}  + x_1+\cdots+ x_n$$
are LPP of $\F_q[x_1,\ldots,x_n]$ of degree $n(2^r-2)$ for $n \geq  1$.
\end{theorem}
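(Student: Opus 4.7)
The plan is to recognize that the main bulk of $\beta_q^{(n)}$ factors as a product of univariate polynomials. Explicitly,
\[
\sum_{1\le i_1,\ldots,i_n\le q-2} x_1^{i_1}\cdots x_n^{i_n} \;=\; \prod_{j=1}^n s(x_j), \qquad s(x):=\sum_{i=1}^{q-2} x^i,
\]
so $\beta_q^{(n)}(x_1,\ldots,x_n)=\prod_{j=1}^n s(x_j)+\sum_{j=1}^n x_j$. First I would evaluate $s$ as a function $\F_q\to\F_q$. One has $s(0)=0$, $s(1)=q-2=0$ (using $\mathrm{char}\,\F_q=2$), and for $a\in\F_q\setminus\{0,1\}$ the identity $\sum_{i=0}^{q-2}a^i=(a^{q-1}-1)/(a-1)=0$ gives $s(a)=-1=1$. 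So $\prod_j s(x_j)$, as a function, is the indicator of the event that every coordinate lies outside $\{0,1\}$.

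Next I would verify the LPP property by fixing arbitrary $(a_2,\ldots,a_n)\in\F_q^{n-1}$ and analysing the univariate polynomial $\beta_q^{(n)}(x_1,a_2,\ldots,a_n)$; symmetry then covers the other variables. Two cases arise. If some $a_j\in\{0,1\}$ for $j\ge 2$, the product $\prod_{j\ge 2}s(a_j)$ vanishes and the restriction reduces to $x_1+c$ for a constant $c$, clearly a PP in $x_1$. If instead $a_j\notin\{0,1\}$ for every $j\ge 2$, then $\prod_{j\ge 2}s(a_j)=1$ and the restriction becomes $s(x_1)+x_1+c$. Using the values of $s$ above, the map $a\mapsto s(a)+a$ sends $0\mapsto 0$, $1\mapsto 1$, and $a\mapsto a+1$ for $a\notin\{0,1\}$; since $a\mapsto a+1$ is a bijection of $\F_q\setminus\{0,1\}$ (recall $\mathrm{char}\,\F_q=2$), this is indeed a permutation of $\F_q$. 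Hence the restriction is a univariate PP in every case.

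Finally, for the degree claim I would note that each $i_j$ in the defining sum is at most $q-2<q$, so the polynomial is already in its reduced form; the monomial $x_1^{q-2}x_2^{q-2}\cdots x_n^{q-2}$ appears with coefficient $1$ and has total degree $n(q-2)=n(2^r-2)$, while the linear tail $x_1+\cdots+x_n$ contributes nothing of higher degree. Combining the two steps shows that $\beta_q^{(n)}$ is an LPP of $\F_q[x_1,\ldots,x_n]$ of degree $n(q-2)$.

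The only delicate point — and the step I would present carefully — is the char-$2$ computation that $s(1)=0$ and $s(a)=1$ for $a\notin\{0,1\}$, together with checking that $s(x)+x$ is a permutation. Everything else is structural and follows from the factorization and the symmetry of $\beta_q^{(n)}$ in its variables.
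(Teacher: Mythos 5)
Your proof is correct and rests on the same key observations as the paper's: the inner sum factors as $\prod_j s(x_j)$ with $s(a)=0$ for $a\in\{0,1\}$ and $s(a)=1$ otherwise, so every univariate restriction is either $x+c$ or $s(x)+x+c$, the latter being the permutation fixing $0,1$ and translating by $1$ elsewhere. The only difference is organizational — the paper packages this as an induction on $n$ with the case $n=2$ as (asserted) base, while you fix $n-1$ variables at once and check the restriction directly, which is if anything slightly more self-contained.
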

\begin{proof}
Since the degree is in fact $n(q-2)$, we just need to prove that it is indeed an LPP, but this follows from the fact that 
\begin{itemize}
\item  $\beta_{q}^{(n)}(x_1, \ldots, x_{n-1}, a) = x_1+\cdots+ x_{n-1} +a$, for $a =0$ and $a = 1$.
\item  $\beta_{q}^{(n)}(x_1, \ldots, x_{n-1}, u^i) = \beta_{q}^{(n-1)}(x_1, \ldots, x_{n-1}) +u^i$, for $i=1,\ldots, q-2$
\end{itemize}
where  $u$ is a primitive element of $\F_q$, i.e.  $\F_q= \{ 0, 1, u, u^2,\ldots, u^{q-2} \}$, and induction, since  the polynomial $\beta_q^{(2)} \in \F_q[x_1,x_2]$ is LPP.

\end{proof}

To prove our result for odd characteristic, we first state a simple lemma which allows to select properly the number of variables.

\medskip


\begin{lemma}\label{lem:less} If there is an LPP polynomial $f(x_1\dots,x_n)\in\F_q[x_1,\dots,x_n]$ of maximum degree  then there is an LPP polynomial of maximum degree for any $m\le n$.
\end{lemma}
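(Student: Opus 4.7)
The plan is to prove the lemma by induction on $n-m$, reducing the number of variables one at a time. The base case $m=n$ is trivial, so the content lies in the following claim: given an LPP $f(x_1,\dots,x_n)\in\F_q[x_1,\dots,x_n]$ of maximum total degree $n(q-2)$, there exists $a\in\F_q$ such that $g(x_1,\dots,x_{n-1}):=f(x_1,\dots,x_{n-1},a)$ is an LPP of maximum degree $(n-1)(q-2)$. Once this is established, iterating $n-m$ times produces the desired LPP in $m$ variables.

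First I would check that specialization preserves the LPP property. For any $i\in\{1,\dots,n-1\}$ and any $(b_1,\dots,b_{i-1},b_{i+1},\dots,b_{n-1})\in\F_q^{n-2}$, the univariate slice $g(b_1,\dots,b_{i-1},x_i,b_{i+1},\dots,b_{n-1})$ is exactly $f(b_1,\dots,b_{i-1},x_i,b_{i+1},\dots,b_{n-1},a)$, which permutes $\F_q$ since $f$ is LPP. Hence $g$ is automatically LPP in $n-1$ variables, and the only thing to ensure is that some choice of $a$ retains the maximum degree.

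Second, the maximum-degree hypothesis pins down the leading shape of $f$. For $q>2$ the LPP property forces $\deg_{x_i}(f)\le q-2$ for every $i$: a univariate permutation polynomial of $\F_q$ reduced modulo $x^q-x$ has no $x^{q-1}$ term, so the coefficient of $x_i^{q-1}$ in $f$ must vanish at every point of $\F_q^{n-1}$, hence identically. Consequently, any monomial of $f$ of total degree $n(q-2)$ must be $x_1^{q-2}\cdots x_n^{q-2}$; let $c\ne 0$ be its coefficient. Writing $f=\sum_{j=0}^{q-2}f_j(x_1,\dots,x_{n-1})\,x_n^j$, the monomial $x_1^{q-2}\cdots x_{n-1}^{q-2}$ appears in $f_{q-2}$ with coefficient $c$.

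Third I would choose $a$ by a pigeonhole on roots. The coefficient of $x_1^{q-2}\cdots x_{n-1}^{q-2}$ in $g$ is the value $P(a):=\sum_{j=0}^{q-2}c_j\,a^j\in\F_q[a]$, where $c_j$ is the coefficient of $x_1^{q-2}\cdots x_{n-1}^{q-2}$ in $f_j$. Since $c_{q-2}=c\ne 0$, $P$ is a nonzero polynomial of degree at most $q-2$ in $a$, hence has at most $q-2$ roots in $\F_q$; thus at least two elements $a\in\F_q$ satisfy $P(a)\ne 0$, and for any such $a$ the polynomial $g$ has degree exactly $(n-1)(q-2)$. This closes the induction. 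I do not anticipate a real obstacle: the only step requiring care is the per-variable bound $\deg_{x_i}(f)\le q-2$, which is standard, after which the existence of a good $a$ is immediate.
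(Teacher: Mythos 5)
Your proof is correct and follows essentially the same route as the paper's: specialize one variable (the paper fixes $x_1=\alpha$, you fix $x_n=a$), note that every univariate slice of an LPP is automatically a permutation polynomial so the specialization stays LPP, and then argue that some specialization value keeps the coefficient of $x_1^{q-2}\cdots x_{n-1}^{q-2}$ nonzero. Your root-counting step --- the leading coefficient is a polynomial of degree exactly $q-2$ in the specialized value, hence has at least two non-roots --- is a slightly more explicit rendering of the paper's comparison of two values $\alpha,\beta$ with $a\beta^{q-2}\ne a\alpha^{q-2}$, so the two arguments are in substance the same.
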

\begin{proof} Let the polynomial $f(x_1\dots,x_n)=ax_1^{q-2}\cdots x_n^{q-2}+P$ for some $P$ of smaller degree. We know that for any $\alpha \in \F_q$ we have $f(\alpha,x_2,\dots,x_n)=a\alpha^{q-2}x_2^{q-2}\cdots x_n^{q-2}+P$ is an LPP polynomial over $\F_q$. If it has not degree $(n-1)(q-2)$ is because $P$ has the monomial $-a\alpha^{q-2}x^{q-2}\cdots x_n^{q-2}$, but then  $f(\beta,x_2\dots,x_n)$ has the monomial  $(a\beta^{q-2}-a\alpha^{q-2})x_2^{q-2}\cdots x_n^{q-2}\ne 0.$
\end{proof}
We are ready to show the main result os this section.

\medskip

\begin{theorem}\label{teo:divis} Let $q$ be such that $(b,q-1)=1$ for some $1<b<p-1$. Then for any $n\ge 1$ there is an LPP over $\F_q[x_1,\dots, x_n]$ of maximum degree, defined over $\F_p$. 
\end{theorem}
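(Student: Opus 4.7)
My plan is to prove the theorem by induction on $n$, constructing for each $n$ an explicit LPP $f_n\in\F_p[x_1,\ldots,x_n]$ of degree $n(q-2)$. The base case ($n=2$ from \cite{DHK}, or $n=1$ using any univariate PP of degree $q-2$) is handled separately. The hypothesis $\gcd(b,q-1)=1$ with $1<b<p-1$ ensures that $x\mapsto x^b$ is a PP of $\F_q$ with coefficients in $\F_p$ (since $(p-1)\mid(q-1)$ forces $\gcd(b,p-1)=1$), supplying a nontrivial univariate PP of degree $b\in(1,p-1)$ that we can incorporate into the construction.

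For the inductive step, given $f_{n-1}$, I would look for $f_n$ of the form
$$f_n(x_1,\ldots,x_n) = f_{n-1}(x_1,\ldots,x_{n-1})\,\Phi(x_n) + \Psi(x_1,\ldots,x_n),$$
with $\Phi$ a univariate polynomial of degree $q-2$ (providing the extra $q-2$ in the total degree) and $\Psi$ a lower-degree correction. The LPP condition on the slices $f_n(x_1,\ldots,x_{n-1},a)$ is easy to arrange: when $\Phi(a)\ne 0$, this slice equals $\Phi(a)f_{n-1}+\Psi(x_1,\ldots,x_{n-1},a)$, which is an LPP provided $\Psi(x_1,\ldots,x_{n-1},a)$ is constant in $(x_1,\ldots,x_{n-1})$; when $\Phi(a)=0$ we need $\Psi(x_1,\ldots,x_{n-1},a)$ itself to be an LPP, which can be managed by choosing $\Phi$ with very few roots. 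Useful here is the observation that $\alpha x^{q-2}+K$ is already a PP of $\F_q$ for any $\alpha\in\F_q^\ast$ and $K\in\F_q$, because $x\mapsto\alpha/x$ permutes $\F_q^\ast$ and $0\mapsto 0$ supplies the missing value after the shift by $K$.

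The main obstacle is the orthogonal condition: for every $(c_1,\ldots,c_{n-1})\in\F_q^{n-1}$, setting $F=f_{n-1}(c_1,\ldots,c_{n-1})\in\F_q$, the univariate polynomial $F\Phi(x_n)+\Psi(c_1,\ldots,c_{n-1},x_n)$ must be a PP of $\F_q$. Since $F$ ranges over all of $\F_q$ as $(c_1,\ldots,c_{n-1})$ varies, this is a one-parameter family of PP conditions, and a short case analysis shows it cannot be satisfied if $\Psi$ depends only on $x_n$. So $\Psi$ must be coupled to $f_{n-1}(x_1,\ldots,x_{n-1})$, and the hypothesis on $b$ enters precisely at this step: adding terms such as $h(f_{n-1}(x_1,\ldots,x_{n-1}))\cdot x_n^b$ to $\Psi$, for an appropriate $h\in\F_p[y]$, allows the $F$-parametric family to be tuned into PPs via the PP property of $x^b$, with the restriction $b<p-1$ giving the algebraic flexibility to realize the tuning with coefficients in $\F_p$. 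Once such $\Phi$ and $\Psi$ are exhibited, verifying the total degree and both batches of LPP conditions is a direct case analysis, and Lemma~\ref{lem:less} recovers any smaller $m\le n$ should a single case of the induction prove delicate.
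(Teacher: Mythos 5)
Your proposal is a plan, not a proof: the entire difficulty of the theorem is concentrated in the step you leave open, namely exhibiting $\Phi$, $\Psi$ and $h$ such that $F\Phi(x_n)+h(F)x_n^{b}+\cdots$ is a permutation polynomial of $\F_q$ \emph{simultaneously for every} $F\in\F_q$ (including $F=0$), while $\Phi$ has degree $q-2$ and the top coefficient of $f_{n-1}\Phi(x_n)$ survives. You assert that the hypothesis on $b$ ``gives the algebraic flexibility to realize the tuning,'' but you never show that any such tuning exists, and it is far from clear that one does: a product ansatz $f_{n-1}\cdot\Phi(x_n)$ forces you to solve a one-parameter family of PP conditions indexed by $F\in\F_q$, together with the transposed family of conditions in each $x_j$, $j<n$ (there you need $y\mapsto\Phi(a)y+a^{b}h(y)$ to permute $\F_q$ for every $a$), and no candidate satisfying all of these is produced. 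So as written the argument has a genuine gap at its central step.

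It is worth seeing how the paper sidesteps this entirely. Instead of climbing one variable at a time with a product, it takes $f_1=(y_1+\cdots+y_b)^b$, which is an LPP \emph{for free} by Theorem~\ref{composition} (a sum of disjoint-variable LPPs composed with the PP $x^b$, using $\gcd(b,q-1)=1$), and iterates $f_{i+1}=(f_{i,0}+\cdots+f_{i,b-1})^b$ on disjoint blocks to get an LPP in $b^k$ variables, again automatically. The only thing that has to be checked is a coefficient: the multinomial expansion gives $f_k=b!^{(b^k-1)/(b-1)}y_1\cdots y_{b^k}+r_k$ where every monomial of $r_k$ omits some variable, and $b!\not\equiv 0\pmod p$ precisely because $b<p-1$. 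Substituting $y_j=x_j^{q-2}$ and reducing modulo $\langle x_j^q-x_j\rangle$ then yields degree exactly $b^k(q-2)$, and Lemma~\ref{lem:less} descends to any $n\le b^k$. In other words, the role of $b$ is not to ``tune'' a family of univariate PPs but to make $(y_1+\cdots+y_b)^b$ an LPP whose full monomial has unit coefficient; the LPP property never has to be verified by hand. If you want to salvage your inductive scheme you would have to either construct the orthogonal family explicitly or replace the product $f_{n-1}\Phi(x_n)$ by a composition to which Theorem~\ref{composition} applies directly, which is essentially what the paper does.
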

\begin{proof} Consider the polynomial 
$$
f_1(y_1,\dots,y_b)=(y_1+\dots+y_b)^b,
$$
which is an LPP by Theorem \ref{composition}. Since it is a form,  we have
$$
f_1:=f_1(y_1,\dots,y_b)=\sum_{a_1+\dots+a_b=b}y_1^{a_1}\cdots y_{b}^{a_b}=b!y_1\cdots y_b+r(y_1,\dots, y_b),
$$
where 
$$
r(y_1,\dots, y_b)=\sum_{\substack{a_1+\cdots+a_b=b \\ a_1\cdots a_b=0}}y_1^{a_1}\cdots y_{b}^{a_b}.
$$
Note that for $b$ positive integers to add up to b, either all of them are $1$ or at least one must be cero. In particular every monomial of $r(y_1,\dots, y_b)$ has less than $b$ variables. Now consider the sequence of polynomials 
$$
f_{i+1}=f_{i+1}(y_1,\dots, y_{b^{i+1}})=\left(f_{i,0}+\dots+f_{i,b-1}\right)^b,
$$
where
$$
f_{i,k}=f_i(y_{kb^{i}+1},\dots, y_{(k+1)b^i}),
$$
for $k=0,\dots b-1$.
By induction, we see that 
$$
f_i=b!^{\frac{b^i-1}{b-1}}y_1\cdots y_{b^i}+r_i(y_1,\dots, y_{b^i}),
$$
where all the monomials of $r_i(y_1,\dots, y_{b^i})$ have less than $b^i$ variables.
We have already done the case for $i=1$. Now, 
$$
f _{i+1}=b!f_{i,0}\cdots f_{i,b-1}+r(f_{i,0}\cdots f_{i,b-1})
$$
where on each monomial of $r(f_{i,0}\cdots f_{i,b-1})$ there is one of the $f_{i,k}$ missing and hence, has less than $b^{i+1}$ variables. We apply now induction to get 
$$
f_{i+1}=b!^{\frac{b^{i+1}-1}{b-1}}y_1\cdots y_{b^{i+1}}+r_{i+1}(y_1,\dots, y_{b^{i+1}}).
$$
where  each monomial of $r_{i+1}(f_{i,0}\cdots f_{i,b-1})$  has less than $b^{i+1}$ variables.
Now, consider the ideal on $\F_q[x_1,\dots,x_{b^k}]$ given by $\mathcal I=<x_1^{q}-x_1,\dots, x_{b^k}^{q}-x_{b^k}>$ and  consider the polynomial
$$
F(x_1,\dots, x_{b^k})=f_k(x_1^{q-2},\dots, x_{b_k}^{q-2}) {\pmod {\mathcal  I}}
$$
Since the only element of degree $n(q-2)$ of a polynomial with $n$ variables reduced modulo $\mathcal I$, must have the monomial $x_1^{q-2}\cdots x_n^{q-2}$, we see that deg$F(x_1,\dots, x_{b^k})$ is indeed $b^k(q-2)$ since, this monomial is not in $r_k$ and $b!^{\frac{b^{i+1}-1}{b-1}}$ is non zero modulo $p$. Once we have the theorem for polynomials with $b^k$ variables, apply Lemma \ref{lem:less} to get the result for any $n$.
\end{proof}

 Observe that the theorem do not cover every $q$. Just consider $q=p^r$ where $\varphi((p-2)!)|r$. Then, every $1<b<p-1$ is a divisor of $q-1$ by Euler's Theorem, since $(q,(p-2)!)=1$. We will attack the remaining cases in the following way. 
 
 \
 
 \begin{theorem}\label{lppp} Let $q=p^r > 3$ for $p\ge 3$ prime and  $r$ a natural number. There exist an LPP $f(x_1,\ldots,x_n) \in  \F_q[x_1,\ldots,x_n]$ of maximum degree $n(q-2)$. 
\end{theorem}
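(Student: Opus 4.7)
The plan is to handle precisely those $q=p^r$ not covered by Theorem \ref{teo:divis}, namely the $q$ for which every integer $b$ with $1<b<p-1$ shares a common factor with $q-1$. By the remark preceding the statement this set contains, in particular, every $q=3^r$ with $r\ge 2$ (the range $1<b<p-1$ being empty for $p=3$) and, for larger primes, those $r$ that are multiples of $\operatorname{lcm}_{\ell<p-1}\operatorname{ord}_\ell(p)$. For these $q$ the device $(y_1+\cdots+y_b)^b$ of Theorem \ref{teo:divis} is unavailable, so one must construct an LPP of maximum degree directly.

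My plan is to imitate the characteristic-two template $\beta_q^{(n)}$ of Theorem \ref{lpp2}. Recall that the success of that construction depends critically on $q-2\equiv 0\pmod 2$, which causes two slices (at $x_n\in\{0,1\}$) to collapse to linear polynomials and the remaining $q-2$ slices (at $x_n\in\F_q\setminus\{0,1\}$) to each equal $\beta_q^{(n-1)}+u^i$. In odd characteristic the uniform coefficient $1$ on every monomial no longer produces these cancellations, so I would replace it by a carefully chosen family of coefficients and use the template
\[
f_n(x_1,\ldots,x_n)=\sum_{1\le i_1,\ldots,i_n\le q-2} c_{i_1,\ldots,i_n}\,x_1^{i_1}\cdots x_n^{i_n}+x_1+\cdots+x_n,
\]
solving for the $c_{i_1,\ldots,i_n}$ recursively in $n$ so that each slice $f_n(x_1,\ldots,x_{n-1},a)$ is either linear (for $a\in\{0,1\}$) or is a PP-composition $\lambda(a)f_{n-1}(x_1,\ldots,x_{n-1})+\mu(a)(x_1+\cdots+x_{n-1})+\nu(a)$ of $f_{n-1}$ for the remaining $a$. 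The polynomial $\bar h$ of Lemma \ref{polinomio1}, which has degree $q-2$ and satisfies $\bar h(0)=\bar h(1)$, is the natural gadget encoding the $\{0,1\}$-versus-the-rest dichotomy and is a likely source of the correct coefficients.

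With such a recursion in hand, LPP-ness of $f_n$ follows inductively from Theorem \ref{composition}(2)--(3), and the maximum degree $n(q-2)$ from checking that the coefficient of $x_1^{q-2}\cdots x_n^{q-2}$ is a product of nonzero leading coefficients propagated through the recursion. Once the construction is established for some $n$, Lemma \ref{lem:less} descends it to every smaller $m$.

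The principal obstacle I expect is the consistency of the coefficient system: for each fixed $(i_1,\ldots,i_{n-1})$ the unknowns $c_{i_1,\ldots,i_{n-1},i_n}$ must satisfy $q-1$ linear constraints (one at $a=1$ forcing the linear-slice condition and one per $a\in\F_q\setminus\{0,1\}$ matching the $f_{n-1}$-scaled slice) with only $q-2$ degrees of freedom. Resolving this over-determination requires exploiting the freedom in the scaling functions $\lambda,\mu,\nu$, together with the identity $\sum_{a\in\F_q^*}a^{i_n}=0$ for $1\le i_n\le q-2$. Carrying out this linear algebra in characteristic $p$ --- in particular ensuring that the leading coefficient does not inadvertently collapse when $p=3$ --- is the heart of the argument.
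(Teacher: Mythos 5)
There is a genuine gap: what you have written is a plan whose central step is explicitly left unresolved. You propose to mimic $\beta_q^{(n)}$ with unknown coefficients $c_{i_1,\ldots,i_n}$ chosen so that every slice $f_n(x_1,\ldots,x_{n-1},a)$ is either linear or an affine rescaling of $f_{n-1}$, and you yourself identify the over-determined linear system ($q-1$ constraints on $q-2$ unknowns per multi-index, to be solved consistently in characteristic $p$) as ``the heart of the argument'' --- but you do not solve it, nor give any reason to believe it is solvable. Since the statement being proved is precisely an existence claim, a template with undetermined coefficients and an acknowledged consistency obstruction is not a proof. Moreover, the requirement that each non-linear slice be of the form $\lambda(a)f_{n-1}+\mu(a)(x_1+\cdots+x_{n-1})+\nu(a)$ is much stronger than what LPP-ness needs, and there is no evidence such a rigid coefficient system exists in odd characteristic.

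The paper's proof avoids all of this by abandoning the recursive-slice idea. Having reduced to $r\ge 2$ via Theorem \ref{teo:divis}, it builds a single univariate gadget: the indicator polynomial $p(x)=\sum_{i=0}^{p-1}g_i(x)$ of the set $Z=\{0,1,\ldots,p-2,\beta\}$ with $\beta\notin\F_p$. By Lemma \ref{lem:deg}, $\deg p=q-2$ exactly because $|Z|=p\equiv 0$ while $\sum_{z\in Z}z=\beta+1\ne 0$ (replacing $p-1$ by $\beta$ is the one clever move; the full set $\{0,\ldots,p-1\}$ would have zero sum). The LPP is then the separable polynomial $f=\prod_{i=1}^n p(x_i)+\sum_i t_\beta(x_i)$, whose degree is $n(q-2)$ for free, and whose slices are either $t_\beta(x)+C$ (when some $c_i\notin Z$) or $p(x)+t_\beta(x)+C$, each checked to be a permutation by a short case analysis. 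If you want to salvage your approach, the lesson to take from the paper is that you do not need every slice to reproduce $f_{n-1}$; you only need every slice to be \emph{some} permutation polynomial, and a product of degree-$(q-2)$ indicator functions plus a sum of univariate permutations achieves this with no linear algebra at all.
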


\begin{proof} The result is contained in Theorem \ref{teo:divis}  for $\F_p[x_1,\ldots, x_n]$ so we can assume that $q=p^r>3$ and $r\ge 2$.  Aplying the Lagrange interpolation formula  (\ref{eq:lag}), we write  the polynomial $f(x)\in\F_q[x]$  as  the linear combination
\begin{equation*}\label{eq:polin}
f(x)=\sum_{i=0}^{q-1}\alpha_ig_i(x),
\end{equation*}
where $\{a_0,\dots, a_{q-1}\}=\F_q$,  $f(a_i)=\alpha_i$, and $g_i=1-(x-a_i)^{q-1}$, for $i=0,\dots, q-1$. The following lemma follows directly from the definition of $g_0,\dots, g_{q-1}$.

\medskip

\begin{lemma}\label{lem:deg} With the notation as above, the polynomial  $f(x)$ has degree $q-2$ if and only if $\sum_{i=0}^{q-1}{\alpha_i}=0$,and  $\sum_{i=0}^{q-1}a_i\alpha_i\ne 0$.
\end{lemma}

\
Now, we consider, with the notation as above,  $a_i=i$, for $i=0,\dots, p-2$ and  $a_{p-1}=\beta\notin\F_p$, and the values $\alpha_i=1$ for $i=0,\dots,p-1$ and $\alpha_j=0$ otherwise. Then  $\sum_{j=0}^{q-1}\alpha_i=0$, while 
$$
\sum_{j=0}^{q-1} a_i\alpha_i=\beta+\sum_{j=0}^{p-2} i=\beta+\frac{(p-1)(p-2)}{2}=\beta+1+p\frac{p-3}{2}=\beta+1\ne 0,
$$ 
so the polynomial $p(x)=\sum_{i=0}^{p-1}g_i(x)$ has degree $q-2$ by Lemma \ref{lem:deg} and  $p(x)=0$ if $x\notin Z$, where $Z=\{a_0,\dots, a_{p-1}\}$. Consider now  the polynomial
$$
f(x_1,\dots,x_n)=\prod_{i=1}^np(x_i)+\sum_{i=1,\dots, n}t_{\beta}(x_i),
$$
where $t_{\beta}(x)$ is the transposition that sends $t_\beta(\beta)=p-1$ and $t_\beta(p-1)=\beta$.  The degree of $f$  is $n(q-2)$. Moreover, since it is symmetric we just need to prove that the polynomial  $F(x)=f(x,c_2,\dots,c_n)$ is a permutation polynomial for any selection of $c_2\dots,c_n$. Now, if $\{c_2,\dots, c_n\}\not\subset Z$  then $F(x)=t_\beta(x)+C$, which is a permutation polynomial. On the other hand, if $\{c_2,\dots c_n\}\subset Z$, then
$F(x)=p(x)+t_\beta(x)+C$. If $\{x,y\}\subset \F_q\setminus Z$,  or  $\{x,y\}\subset Z$, then clearly $F(x)\ne F(y)$, so it remains to prove that if $x\in Z$ and $y\notin Z$, then $F(x)\ne F(y)$. Now, for $i=0,\dots, p-2$, the value of $F$ is given by $F(a_i)=1+t_{\beta}(a_i)+C=i+1+C$, while  $F(y )=t_\beta(y)+C$, so if they are equal $t_{\beta}(y)=i+1$, so it must be that $y\in Z$. Finally $F(\beta)=C=t_{\beta}(y)+C$ only if $y=0\in Z$, so $F(x)$ is indeed PP.

\end{proof}


\

\section{Local permutation polynomials of maximum degree in $\F_q[x_1,x_2,x_3]$}\label{tresvariables}

In this section we show a different  strategy in order to find  LPP of maximum degree and we illustrate it  for polynomials with only three variables  providing new families of   LPP of maximum degree $3(q-2)$ for  $q=p^r>3$ and $p=2$ and $p=3$. 

\

For $i = 1,\ldots, n$  we consider the following recurrence polynomial relation:
\begin{equation} 
\begin{split}
\label{relation}
&f_1 = x_1 \\
&f_i = f_i(x_1,\ldots,x_i) = t(f_{i-1}(x_1,\ldots, x_{i-1})^{q-2} + x_i^{q-2})=t(f_{i-1}^{q-2}+x_i^{q-2}),
\end{split}
\end{equation}
where $t(x)$ is as in Theorem \ref{composition}.  Since $\gcd(q-1, q-2)=1$ then  $x^{q-2}$  is a permutation polynomial, see \cite{LN}.  By Theorem \ref{composition} we have $f_i \in \F_p[x_1,\ldots,x_n]$  is an LPP in  $\F_q[x_1,\ldots,x_n]$  for $i=1,\ldots, n$. 

\

 We also  know:

\begin{theorem}[\cite{GU}] \label{jorge} For any $q=p^r >3$ and  $p \not=2$ the polynomial $f_2(x_1,x_2)$ has degree $2(q-2)$.
\end{theorem}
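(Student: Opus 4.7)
The plan is to combine the a priori bound $\deg f_2 \le 2(q-2)$, given by part 5 of Theorem \ref{composition}, with an explicit computation of the coefficient of the leading monomial $x_1^{q-2}x_2^{q-2}$ in the reduced form of $f_2$, showing that this coefficient is nonzero. To extract it I would apply the Lagrange interpolation formula (\ref{eq:lag}) in two variables: a direct expansion of $(x-a)^{q-1}$ shows that the coefficient of $x^{q-2}$ in $1-(x-a)^{q-1}$ equals $-a$ (using $q-1\equiv -1 \pmod p$), so the sought coefficient is
$$\sigma \;=\; \sum_{(a,b)\in\F_q^2} ab\,f_2(a,b) \;=\; \sum_{a,b\in\F_q\setminus\{0\}} ab\, t(a^{-1}+b^{-1}),$$
because any term with $a$ or $b$ equal to $0$ is annihilated by the factor $ab$, and $a^{q-2}=a^{-1}$ on $\F_q^*$.

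The substitution $u=a^{-1}$, $v=b^{-1}$ rewrites the sum as $\sigma=\sum_{u,v\in\F_q^*}(uv)^{-1}\,t(u+v)$. By Theorem \ref{composition}.4, $t$ transposes $0$ and $1$ and fixes every other element of $\F_q$, so I would partition the sum according to the value $s = u+v$. The case $s=1$ contributes $0$; the case $s=0$ reduces to $-\sum_{u\in\F_q^*}u^{-2}$, which vanishes for $q>3$ because $\sum_{u\in\F_q^*}u^k=0$ whenever $k\ge 1$ and $(q-1)\nmid k$. For each $s\notin\{0,1\}$, the partial fraction identity
$$\frac{1}{u(s-u)}=\frac{1}{s}\left(\frac{1}{u}+\frac{1}{s-u}\right),$$
combined with $\sum_{w\in\F_q^*}w^{-1}=0$, reduces the inner sum over $u$ to $-2/s^2$, yielding a contribution of $-2/s$ for each such $s$.

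Adding the three cases produces $\sigma = -2\sum_{s\in\F_q^*\setminus\{1\}}s^{-1} = -2(0-1) = 2$, which is nonzero in $\F_p$ exactly because $p\neq 2$. The role of each hypothesis then becomes transparent: $q>3$ is what forces the troublesome power-sum in the $s=0$ case to vanish, while $p\neq 2$ is needed only at the final step so that $2$ survives as a nonzero element of $\F_q$. The main technical obstacle is the partial-fraction bookkeeping for the generic-$s$ case and the careful tracking of index sets after the change of variable $v=s-u$; once those are in place, everything else reduces to the standard power-sum identities over $\F_q^*$.
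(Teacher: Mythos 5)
Your argument is correct, but it is not the route the paper takes: the paper does not prove Theorem \ref{jorge} at all, quoting it from \cite{GU}, and the technique it uses for the analogous three-variable statement (Theorem \ref{threecase}) is a direct expansion of $t(x_1^{q-2}+x_2^{q-2})$ via the binomial theorem, tracking which exponents $(k-j)(q-2)$ and $j(q-2)$ reduce to $q-2$ modulo $q-1$; that computation isolates $k=2$, $j=1$ and yields the leading coefficient $\binom{2}{1}=2$ directly. Your approach instead reads off the same coefficient of $x_1^{q-2}x_2^{q-2}$ as the evaluation sum $\sum_{a,b}ab\,f_2(a,b)$ coming from Lagrange interpolation, and then computes that character-sum-like expression by partitioning over $s=u+v$ and using partial fractions together with the power-sum identities $\sum_{u\in\F_q^*}u^k=0$ for $(q-1)\nmid k$. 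All the steps check out: the coefficient of $x^{q-2}$ in $1-(x-a)^{q-1}$ is indeed $-a$, the $s=0$ case vanishes precisely because $q>3$, the generic case gives $-2/s$ per value of $s$, and the total is $2\neq 0$ since $p\neq 2$; combined with the bound $\deg f_2\le 2(q-2)$ from Theorem \ref{composition}(5) this gives the claim. The two methods are a useful cross-check of one another (both locate the obstruction at the single coefficient $2$): the paper's expansion is shorter and generalizes mechanically to $f_3$ and $f_4$ as done in Section \ref{tresvariables}, while your interpolation-theoretic computation only needs the values of $f_2$ as a function and so would apply verbatim to any map agreeing with $t(u+v)$ on inverses, at the cost of somewhat heavier bookkeeping over the index sets.
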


This section shows that $f_3(x_1,x_2,x_3)$ has degree $3(q-2)$ when $q=p^r>3$ and $p \not=2$.

\begin{theorem}\label{threecase} With the above notations and definitions we have  $f_3(x_1,x_2,x_3)$ has degree $3(q-2)$ when $q=p^r>3$ and $p \not=2$.
\end{theorem}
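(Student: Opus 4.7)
The plan is to establish the lower bound on $\deg f_3$ by showing that the coefficient of the monomial $x_1^{q-2}x_2^{q-2}x_3^{q-2}$ in the canonical representative of $f_3$ (with $\deg_{x_i}<q$) is non-zero modulo $p$; the matching upper bound $\deg f_3\le 3(q-2)$ is supplied by item~5 of Theorem~\ref{composition}, since $f_3$ is an LPP. Applying the Lagrange interpolation formula \eqref{eq:lag} and extracting this coefficient (exactly as in the proof of the proposition in Section~\ref{pp}, but reading off $x_1^{q-2}x_2^{q-2}x_3^{q-2}$ instead of $x_1^{q-1}x_2^{q-1}x_3^{q-1}$) yields
$$
[x_1^{q-2}x_2^{q-2}x_3^{q-2}]\,f_3 \;=\; -\sum_{(c_1,c_2,c_3)\in\F_q^3} c_1c_2c_3\, f_3(c_1,c_2,c_3),
$$
which collapses to a sum over $(\F_q^*)^3$ by the factor $c_1c_2c_3$. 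The bijective substitution $a_i=c_i^{-1}$ turns each $c_i^{q-2}$ into $a_i$ and the weight $c_1c_2c_3$ into $(a_1a_2a_3)^{-1}$, so that the summand becomes $(a_1a_2a_3)^{-1}\,t\bigl(t(a_1+a_2)^{q-2}+a_3\bigr)$.

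Using that $t$ is the identity on $\F_q\setminus\{0,1\}$ and swaps $0$ and $1$ (item~4 of Theorem~\ref{composition}), I would split the sum into three pieces $T_A,T_B,T_C$ according to $a_1+a_2=0$, $a_1+a_2=1$ and $a_1+a_2\notin\{0,1\}$, in which the inner factor $t(a_1+a_2)^{q-2}$ collapses to $1$, $0$ and $(a_1+a_2)^{-1}$ respectively. Each inner sum over $a_3$ is then evaluated by the same three-way case analysis applied to the outer $t$. The only tools needed are the orthogonality relation $\sum_{a\in\F_q^*}a^k\equiv -1$ if $(q-1)\mid k$ and $0$ otherwise, the partial-fraction identity $\frac{1}{a(s-a)}=s^{-1}\bigl(a^{-1}+(s-a)^{-1}\bigr)$ (which gives $\sum_{a_1+a_2=s,\,a_i\ne 0}(a_1a_2)^{-1}=-2s^{-2}$), and the hypothesis $q>3$ (which forces $\sum_{a\in\F_q^*}a^{-2}=0$).

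With these reductions the bookkeeping finishes quickly. The piece $T_A$ vanishes, because it factors through $\sum_{a_1\in\F_q^*} a_1^{-2}=0$. The piece $T_C$, after the inversion $v=(a_1+a_2)^{-1}$ and the expansion $\frac{v^2}{1-v}=-1-v+\frac{1}{1-v}$, reduces to $2\sum_{v\in\F_q\setminus\{0,1\}}\bigl(v+v^2+\frac{v^2}{1-v}\bigr)$, whose three sub-sums add up to $0$ modulo $p$. Only $T_B$ survives, evaluating to $-2(q-2)\equiv 4\pmod p$, so the sought coefficient equals $-4\bmod p$, which is non-zero precisely under the running hypothesis $p\ne 2$. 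This establishes $\deg f_3=3(q-2)$. The main obstacle is the cancellation inside $T_C$: it is a genuine double sum that does not visibly collapse until the inversion $v=(a_1+a_2)^{-1}$ is performed, after which the partial-fraction rewriting of $v^2/(1-v)$ turns every remaining contribution into a standard power sum over $\F_q^*\setminus\{1\}$.
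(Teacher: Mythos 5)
Your proof is correct, and it reaches the paper's exact answer (top coefficient $-4$) by a genuinely different route. The paper works entirely at the level of polynomial identities: it expands $t(f_2^{q-2}+x_3^{q-2})$ binomially, uses the congruence $j(q-2)\equiv q-2 \pmod{q-1}$ to isolate the coefficient of $x_3^{q-2}$, and then invokes Lemma~\ref{polinomio1} to replace $h(x^{q-2})$ evaluated at $f_2=t(x_1^{q-2}+x_2^{q-2})$ by $\bar h(x_1^{q-2}+x_2^{q-2})$, after which a second binomial expansion exhibits $-4x_1^{q-2}x_2^{q-2}$ as the unique surviving term of degree $2(q-2)$. You instead use the interpolation/evaluation duality implicit in \eqref{eq:lag}: the coefficient of $x_1^{q-2}x_2^{q-2}x_3^{q-2}$ equals $-\sum_{c}c_1c_2c_3f_3(c_1,c_2,c_3)$, which you evaluate as an explicit sum over $(\F_q^*)^3$ using only the description of $t$ as the transposition $(0\,1)$ from item~4 of Theorem~\ref{composition}, the power-sum orthogonality relations (where the hypothesis $q>3$ enters, exactly as it does implicitly in the paper), and a partial-fraction identity; I checked the three pieces and indeed $T_A=T_C=0$ and $T_B=4$, giving the coefficient $-4$. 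Your approach dispenses with Lemma~\ref{polinomio1} and the symbolic bookkeeping, at the cost of needing the LPP degree bound (item~5 of Theorem~\ref{composition}) for the upper bound $\deg f_3\le 3(q-2)$, which the paper's exponent arithmetic delivers as a by-product; in exchange, the value-sum formulation scales more transparently to $f_n$ for $n>3$ (the setting of the paper's concluding conjecture), where the nested binomial expansions become unwieldy, though your case analysis over the orbit structure of $t$ would also grow. One small point worth making explicit in a write-up: since $f_3$ is an LPP, $\deg_{x_i}f_3\le q-2$ for each $i$, so $x_1^{q-2}x_2^{q-2}x_3^{q-2}$ is the only candidate monomial of total degree $3(q-2)$, and nonvanishing of its coefficient does finish the argument.
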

\begin{proof}
$$
f_3=t(f_2^{q-2}+x_3^{q-2})= f_2^{q-2} +x_3^{q-2} +\sum_{k=0}^{q-2}(f_2^{q-2}+x_3^{q-2})^k.
$$

\begin{eqnarray*}
f_3&=&f_2^{q-2}+x_3^{q-2} + \sum_{k=0}^{q-2}\sum_{j=0}^{k}\binom{k}{ j}f_2^{(q-2)(k-j)}x_3^{j(q-2)}\\
&=&f_2^{q-2} +y^{q-2}+\sum_{j=0}^{q-2}\left(\sum_{k=j}^{q-2}\binom{k}{ j}f_2^{(q-2)(k-j)}\right)x_3^{j(q-2)}.
\end{eqnarray*}
Now we have $j(q-2)\equiv q-2\pmod{q-1}$ only if $j\equiv 1\pmod {q-1}$. Then the coefficient $C$ in $f_3$ of  $x_3^{q-2}$  is 
$$C=\sum_{k=1}^{q-2}\binom{k}{ 1}f_2^{(q-2)(k-1)} = \sum_{k=0}^{q-3}(k+1)f_2^{(q-2)k} = h(x^{q-2})(f_2)=$$
by Lemma \ref{polinomio1}, we obtain that

$$C=\bar{h}(f_2), \quad {\text where} \quad   \bar{h}(x)= 1 +x + \sum_{k=0}^{q-2}(q-k)x^k$$

Again, by  item (2) of Lemma \ref{polinomio1} we get:
 
$$C= \bar{h}(f_2) = \bar{h}(t)(x_1^{q-2}+x_2^{q-2})=\bar{h}(x_1^{q-2}+x_2^{q-2})=$$
$$= 1+x_1^{q-2}+x_2^{q-2} + \sum_{k=0}^{q-2}(q-k)(x_1^{q-2}+x_2^{q-2})^k$$

\begin{eqnarray*}
C&=&1+x_1^{q-2}+x_2^{q-2}+\sum_{k=0}^{q-2}(q-k)\sum_{j=0}^{k}\binom{k}{ j}x_1^{(k-j)(q-2)}x_2^{j(q-2)}\\
&=&1+x_1^{q-2}+x_2^{q-2}+\sum_{j=0}^{q-2}\left(\sum_{k=j}^{q-2}(q-k)\binom{k}{ j}x_1^{(k-j)(q-2)}\right)x_2^{j(q-2)}.
\end{eqnarray*}
Now we have $j(q-2)\equiv q-2\pmod{q-1}$ only if $j\equiv 1\pmod {q-1}$. Selecting $k=2$ we have that $C$ has the term
$$ -4x_1^{q-2}x_2^{q-2}\ne 0.
$$
For any other $j\ne 1$, $j(q-2)\not\equiv q-2 \pmod{q-1} $, meanwhile for $j=1$ and any other $k$ we have that $(k-j)(q-2)\not\equiv q-2\pmod {q-1}$  and hence  $-4x_1^{q-2}x_2^{q-2}$ is the only monomial of degree $2(q-2)$.

\
As consequence  $-4x_1^{q-2}x_2^{q-2}x_3^{q-2}$ is  the only monomial of degree $3(q-2)$ of the polynomial $f_3$ and this conclude the proof.

\end{proof}

Using a similar proof strategy we obtain that $f_4(x_1,x_2,x_3,x_4)$ has degree $4(q-2)$ and also the  following:

\

\begin{theorem} With the above notations and definitions we have:
\begin{enumerate}
\item The polynomial $t(f_2+x_3^{q-2}) \in \F_q[x_1,x_2,x_3] $ has maximum degree $3(q-2)$ if $q=p^r$ and $p \not=2$ and $p \not=3$.
\item The polynomial $t(\bar{f}_2+x_3^{q-2}) \in \F_q[x_1,x_2,x_3] $  has  maximum degree $3(q-2)$ when $q=3^r > 3$,  where  $\bar{f}_2=\bar{f}_2(x_1,x_2) = t(x_1^{q-4}+x_2^{q-2})$.
\item   Let $q=2^r > 2$ and $s= (q-2)/2$. The polynomial $f= f(x_1,x_2,x_3)= (h_2 + x_3^{s})^{q-2} \in \F_q[x_1,x_2,x_3]$ is LPP of maximun degree $3(q-2)$ where $h_2 = h_2(x_1, x_2) = t(x_1^{q-2} + x_2^{s})$. 
\end{enumerate}
\end{theorem}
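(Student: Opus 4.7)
In all three items, the LPP property itself is immediate from Theorem \ref{composition}: each of $f_2$, $\bar f_2$ and $h_2$ is already an LPP in $\F_q[x_1,x_2]$, adding a univariate permutation of $x_3$ preserves the LPP property by item (1) of Theorem \ref{composition}, and composing with $t$ (respectively $x^{q-2}$) preserves it by item (2). What requires proof in each case is that the claimed total degree $3(q-2)$ is actually attained, i.e.\ that the coefficient of $x_1^{q-2}x_2^{q-2}x_3^{q-2}$ in the fully reduced polynomial is nonzero in $\F_q$.

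For item (1) I would expand
$$t(f_2+x_3^{q-2})=(f_2+x_3^{q-2})+\sum_{k=0}^{q-2}(f_2+x_3^{q-2})^k$$
by the binomial theorem, collect powers of $x_3$ and reduce modulo $x_3^q-x_3$. Arguing as in Theorem \ref{threecase}, the only exponent $j(q-2)$ with $1\le j\le k\le q-2$ that reduces to $q-2$ is $j=1$, so the coefficient of $x_3^{q-2}$ is $C=\sum_{k=1}^{q-2}k\,f_2^{k-1}$. I then study the leading monomial of $C$ in $x_1,x_2$ modulo $x_1^q-x_1$ and $x_2^q-x_2$, mirroring the identification with $\bar h(f_2)$ from Lemma \ref{polinomio1} and the subsequent binomial expansion, so as to show that the coefficient of $x_1^{q-2}x_2^{q-2}$ is a fixed small integer that is nonzero precisely when $p\neq 2,3$. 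Item (2) handles the characteristic-$3$ case: for $q=3^r>3$ one has $\gcd(q-4,q-1)=\gcd(3,q-1)=1$, so $x^{q-4}$ is a permutation polynomial and $\bar f_2$ is a bivariate LPP of maximal degree by Theorems \ref{composition} and \ref{jorge}. I would then rerun the argument of item (1) verbatim with $\bar f_2$ in place of $f_2$; the effect of the exponent $q-4$ (instead of $q-2$) in the $x_1$-variable is to shift the final small-integer coefficient by exactly the amount required so that it is no longer divisible by $3$.

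For item (3), with $q=2^r>2$ and $s=(q-2)/2$, the permutation checks are $\gcd(s,q-1)=\gcd(s,2s+1)=1$ and $\gcd(q-2,q-1)=1$, so $h_2$ is LPP and $x^{q-2}$ permutes $\F_q$. For the degree I expand
$$(h_2+x_3^s)^{q-2}=\sum_{k=0}^{q-2}\binom{q-2}{k}h_2^{q-2-k}x_3^{ks}.$$
Since $2s\equiv -1\pmod{q-1}$ and $\gcd(s,q-1)=1$, the unique $k\in\{1,\dots,q-2\}$ with $ks\equiv q-2\pmod{q-1}$ is $k=2$, so after reducing modulo $x_3^q-x_3$ the coefficient of $x_3^{q-2}$ is $\binom{q-2}{2}h_2^{q-4}$. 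I would verify via Lucas' theorem that $\binom{q-2}{2}$ is odd, and then perform the bivariate analysis of $h_2^{q-4}$ modulo $x_1^q-x_1,x_2^q-x_2$ exactly as in items (1)--(2) to confirm that the coefficient of $x_1^{q-2}x_2^{q-2}$ is nonzero in $\F_q$.

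The recurring obstacle, shared by all three items, is this final bivariate step: after the iterated binomial expansions and reductions, the coefficient of $x_1^{q-2}x_2^{q-2}x_3^{q-2}$ collapses to a small integer (depending only on $p$, and in item (3) also on $\binom{q-2}{2}$), and the delicate arithmetic of checking whether this integer vanishes modulo $p$ is precisely what forces the case split $p\neq 2,3$, $p=3$, $p=2$. The modifications $f_2\leadsto\bar f_2$ and the use of $s=(q-2)/2$ in place of $q-2$ are tailored exactly to keep that small integer alive in characteristics $3$ and $2$ respectively.
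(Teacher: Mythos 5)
Your overall strategy is the same as the paper's: reduce everything to showing that the coefficient of $x_1^{q-2}x_2^{q-2}x_3^{q-2}$ in the reduced polynomial is nonzero, extract the coefficient of $x_3^{q-2}$ by a binomial expansion and the congruence $j(q-2)\equiv q-2\pmod{q-1}$ (respectively $ks\equiv q-2\pmod{q-1}$, forcing $k=2$), and then run a second bivariate expansion. The paper's proof (which is written out only for item (3)) follows exactly this template. However, your proposal stops at the point where the actual work happens: in every item the decisive step is the explicit evaluation of the ``small integer'' coefficient, and you defer it each time (``I would verify\dots'', ``I would then rerun\dots'', ``the delicate arithmetic\dots is precisely what forces the case split''). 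Naming the obstacle is not the same as overcoming it, so as written the proposal does not establish any of the three statements.

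Concretely, what is missing. For item (1) the coefficient of $x_3^{q-2}$ is $1+\sum_{k=0}^{q-3}(k+1)f_2^{k}=1+h(f_2)$ with $h$ as in Lemma \ref{polinomio1}; note that here the argument of $h$ is $f_2$ itself, not $f_2^{q-2}$ as in Theorem \ref{threecase}, so the relevant fact is not the identification with $\bar h$ that you cite but the functional identity $h\circ t=h$ (valid because $h(0)=h(1)=1$), which lets you replace $h(f_2)$ by $h(x_1^{q-2}+x_2^{q-2})$; expanding, the monomial $x_1^{q-2}x_2^{q-2}$ forces $j=1$, $k=2$ and the coefficient is $(k+1)\binom{k}{1}=6$, nonzero exactly when $p\ne 2,3$. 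For item (2) the analogous congruence is $(k-1)(q-4)\equiv q-2\pmod{q-1}$, i.e.\ $3(k-1)\equiv 1\pmod{q-1}$, whose unique solution is $k=3^{r-1}+1$, giving coefficient $k(k+1)\equiv 2\pmod 3$; this is not a ``fixed small integer'' shifted by the exponent change, but a residue computation that must be done, including the check $k\le q-3$. For item (3) the paper's bivariate step hinges on the identity $t(x)^{q-4}\equiv t(x)+x^{q-4}+x\pmod{x^q-x}$, which reduces $h_2^{q-4}$ to three manageable pieces and isolates the single surviving term $\binom{3}{2}x_1^{q-2}x_2^{q-2}$ (together with the vanishing of $\binom{q-4}{2}$ mod $2$); your proposal gives no indication of how you would carry out this analysis. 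Your preliminary checks (the LPP property via Theorem \ref{composition}, $\gcd(q-4,q-1)=1$ for $q=3^r$, $\gcd(s,q-1)=1$ and the uniqueness of $k=2$) are all correct, but the heart of the proof is absent.
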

\begin{proof}
To prove the last statement   let $q=2^r$ for $r>2$.

$$f =(f_2 + x_3^{s})^{q-2}= \sum_{j=0}^{q-2} \binom{q-2}{ j}f_2^{q-2-j}x_3^{js} $$

Now  $js \equiv q-2 \pmod{q-1}$ if only if $j=2$. Then, the coefficient $C$ of $x_3^{q-2}$ in the polynomial $f$ is

$$ C= \binom{q-2}{2}f_2^{q-4}= f_2^{q-4} = t^{q-4}(x_1^{q-2}+x_2^{s})$$

Because  $t^{g-4} \equiv  t +x^{q-4} +x \mod x^q -x $, then

$$C = (x_1^{q-2}+x_2^{s})^{q-4}+x_1^{q-2}+x_2^{s}+ t(x_1^{q-2}+x_2^{s})$$

So, we are looking  for the  coefficient   of the monomial $x_2^{q-2}$ in the polynomial  $C$.
 Clearly  it  only appears  in the  polynomial  $t(x_1^{q-2}+x_2^{s})$, because
 $$ (x_1^{q-2}+x_2^{s})^{q-4} = \sum_{j=0}^{q-4}\binom{q-4}{ j}x_1^{(q-4-j)(q-2)}x_2^{js},
$$
 but $js\equiv q-2 \pmod{q-1}$ only if $j=2$ and $\binom{q-4}{ j}=\binom{q-4}{ 2}=0$.
 
Now,
 $$ t(x_1^{q-2}+x_2^{s}) =x_1^{q-2}+x_2^{s}+\sum_{k=0}^{q-2}(x_1^{q-2}+x_2^{s})^k=$$
 $$= x_1^{q-2}+x_2^{s}+\sum_{j=0}^{q-2}\left(\sum_{k=j}^{q-2}\binom{k}{ j}x_1^{(k-j)(q-2)}\right)x_2^{js}.
$$
Again $js\equiv q-2 \pmod{q-1}$ only if $j=2$ and on the other hand we have that $(k-2)(q-2)\equiv q-2 \pmod{q-1}$ only if $k=3$, so the only term in $ C$ of degree $2(q-2)$ is $x_1^{q-2}x_2^{q-2}\ne 0$, and with this the proof is concluded.
\end{proof}

\

\section{Conclusions and open problems}\label{conclusiones}
Contrary to the many papers and results on permutation polynomials in one variable, there are few  for permutation and local permutation polynomials in several variables. 

\medskip

We  have presented several constructions of  permutation and local permutation polynomials $f\in \F_q[x_1,\ldots, x_n]$, in $n$ variables, of maximum degree generalising  previous results of the two variables case.  Giving others families of LPP of maximum degree and providing applications to latin hypercubes is an interesting problem, see  \cite{DHK} for the relation between Latin Squares and  LPP of maximum degree $2(q-2)$. 

\

We believe that Theorem \ref{threecase} can be generalized:

\begin{conjecture} With the above notations and definitions we have  $f_n(x_1,\ldots,x_n)$ has degree $n(q-2)$ when $q=p^r>3$ and $p \not=2$.
\end{conjecture}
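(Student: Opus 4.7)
The natural strategy is induction on $n$, extending the variable-peeling technique used in the proof of Theorem \ref{threecase}. For each $j \in \{1,\ldots,n-1\}$ one introduces a univariate polynomial $\phi_j(y) \in \F_q[y]$ of degree at most $q-2$, characterised by the property that, in the function ring $\F_q[x_1,\ldots,x_{n-j}]$ modulo $(x_1^q-x_1,\ldots,x_{n-j}^q-x_{n-j})$, the coefficient of $x_{n-j+1}^{q-2}\cdots x_n^{q-2}$ in $f_n$ equals $\phi_j(f_{n-j})$. Since $f_n$ is an LPP by Theorem \ref{composition}, $\deg_{x_i}f_n\leq q-2$ for every $i$, and so the unique monomial of total degree $n(q-2)$ in the reduced $f_n$ is $\prod_{i=1}^{n}x_i^{q-2}$. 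Hence the conjecture reduces to showing that the single scalar $C_n := [\phi_{n-1}]_{q-2}$ (the coefficient of $y^{q-2}$ in $\phi_{n-1}$) is nonzero in $\F_q$.

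The base case $\phi_1$ is computed as in the proof of Theorem \ref{threecase}: expanding $t(f_{n-1}^{q-2}+x_n^{q-2})$ by the binomial theorem and reducing modulo $x_n^q-x_n$ gives $\phi_1(y) = 2-\sum_{k=2}^{q-2}k\,y^k$, independently of $n$. To pass from $\phi_j$ to $\phi_{j+1}$, apply the same binomial calculation to $\phi_j(t(f_{n-j-1}^{q-2}+x_{n-j}^{q-2}))$ and extract the coefficient of $x_{n-j}^{q-2}$. After reducing in the function ring one obtains the explicit recursion
$$\phi_{j+1}(y)=\bigl(\phi_j'(0)+\delta_j\bigr)-\sum_{m=2}^{q-2}m\bigl([\phi_j]_{q-m}+\delta_j\bigr)\,y^m,\qquad \delta_j:=\phi_j(1)-\phi_j(0),$$
where the $\delta_j$-correction arises precisely because $t$ swaps $0$ and $1$ (Theorem \ref{composition}(4)); when $\delta_j=0$ the recursion collapses to the clean identity $\phi_j(t(\cdot))\equiv \phi_j(\cdot)$ guaranteed by Lemma \ref{polinomio1}(2).

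Reading off the leading and sub-leading coefficients of $\phi_{j+1}$ produces the two-step relation
$$[\phi_{j+2}]_{q-2}\equiv -4\,[\phi_j]_{q-2}+2\delta_{j+1}-4\delta_j \pmod p,$$
which, together with the initial value $[\phi_1]_{q-2}=2$ and the sequence $(\delta_j)$, determines $C_n$ inductively. Each $\delta_j$ reduces to power sums of the form $\sum_{m=2}^{q-2}m^s$ in $\F_p$, which vanish unless $q-1\mid s$; in the generic regime (all relevant sums vanish) the $\delta_j$'s alternate between $0$ and $2$ and the recursion yields $C_n\in\{2,-4,-4,8,20,\ldots\}$ for $n=2,3,4,5,6,\ldots$ as integers. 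The hardest part will be proving $p\nmid C_n$ uniformly for all admissible $(p,n)$: for particular small characteristics (for instance the generic value $C_6=20$ is divisible by $5$) the naive integer formula vanishes modulo $p$, and one must track the non-generic $\delta_j$-corrections — arising from the exceptional power sums when $q-1\mid s$ for small $s$ — carefully enough to show that the true value in $\F_p$ remains nonzero. This uniform control of the $(\delta_j)$-sequence across all $q$ and $p$ is the central obstacle.
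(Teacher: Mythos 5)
The statement you are asked to prove is not proved in the paper at all: it appears in Section~\ref{conclusiones} explicitly as a \emph{conjecture}, i.e.\ as an open problem generalising Theorem~\ref{threecase} (the case $n=3$) and the remark that the same method handles $n=4$. So there is no ``paper proof'' to compare against, and the relevant question is whether your proposal actually closes the problem. It does not. Your plan is a sensible and faithful extension of the paper's $n=3$ argument --- peel off one variable at a time, track the coefficient of $x_{n-j+1}^{q-2}\cdots x_n^{q-2}$ as a univariate polynomial $\phi_j$ evaluated at $f_{n-j}$, and reduce everything to the nonvanishing of the single scalar $C_n=[\phi_{n-1}]_{q-2}$ in $\F_p$ --- but that last nonvanishing statement \emph{is} the conjecture, and you leave it open. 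You say so yourself in the final sentences, and your own computation makes the gap concrete: the generic integer value $C_6=20$ vanishes modulo $5$, so the clean two-step recursion cannot by itself decide the case $p=5$, $n=6$; everything then hinges on the exceptional $\delta_j$-corrections coming from power sums with $q-1\mid s$, and you give no argument that these corrections rescue nonvanishing uniformly in $p$, $r$ and $n$.

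Two further points need care even in the part you do sketch. First, the claim that the coefficient of $x_{n-j+1}^{q-2}\cdots x_n^{q-2}$ in the reduced $f_n$ is a univariate polynomial in $f_{n-j}$ is plausible from the recursion $f_i=t(f_{i-1}^{q-2}+x_i^{q-2})$, but it requires a lemma: after reducing modulo the ideal $(x_1^q-x_1,\dots)$ the extracted coefficient is a priori an arbitrary element of $\F_q[x_1,\dots,x_{n-j}]$, and you must show it is functionally of the form $\phi_j\circ f_{n-j}$ before you can iterate; the paper only verifies this for one step (via Lemma~\ref{polinomio1}). Second, the identity $\phi_j(t(\cdot))\equiv\phi_j(\cdot)$ holds as an identity of reduced polynomials only when $\phi_j(0)=\phi_j(1)$, and your $\delta_j$-correction term must itself be shown to produce a polynomial of the claimed shape at the next stage. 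In short: the strategy is the right one and matches the paper's own method for small $n$, but the proposal is an attack plan with an explicitly unproven central step, not a proof.
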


\

Observe  the above  polynomial $f_n(x_1,\ldots,x_n)$ is defined over the prime finite field $\F_p$ and we have constructed LPP  in $\F_q$ of maximum degree defined over the prime finite field $\F_p$ for some $q=p^r>3$, see for instance Theorem \ref{lpp2}, Theorem \ref{teo:divis},  but not for all, see Theorem \ref{lppp}.   Giving   families of LPP  in $\F_{q}[x_1,\ldots,x_n]$  for any $q$ of maximum degree defined in  $\F_{p}[x_1,\ldots,x_n]$ is a challenging open  problem as well.

\end{document}